\newtheorem{remark}{Remark}
\title{Weighted Orthogonal Polynomials-Based Generalization of Wirtinger-Type Integral Inequalities for Delayed Continuous-Time Systems\thanks{This work was supported in part by the National Na\-t\-u\-r\-al Science Foundation of China (11371006 and 61203005), the Natural Science Foundation of Heilong\-jiang Province
(QC2013C068, F201326 and A201416),
the Fund of Heilongjiang Education Committee (12541\-603),
and the Postdoctoral Science-research
Developmental Foundation of Heilongjiang Province (LBH-Q12130).}}
\author{Xian Zhang, Yuanyuan Han, Yantao Wang, Cheng Gong (Corresponding author)\thanks{School of Mathematical Science, Heilongjiang University, Harbin, 150080, P. R. China.
(\email{xianzhang@ieee.org, 1056954098@qq.com, mengxiang4692@sina.com, gongcheng2004@126.com}).}}
\begin{document}
\maketitle
\slugger{simax}{xxxx}{xx}{x}{x--x}

\begin{abstract}
 In the past three years, many researchers have proven and/or employed some Wirtinger-type integral inequalities
 to establish less conservative stability criteria for delayed continu\-ous-time systems.
 In this present paper, we will investigate weighted orthogonal polynomials-based integral inequalities
 which is a generalization of the existing Jensen's inequalities and Wirtinger-type integral inequalities.
\end{abstract}

\begin{keywords}
Wirtinger-type integral inequalities (WTIIs); delayed continuous-time systems; weighted orthogonal polynomials (WOPs).
\end{keywords}

\begin{AMS}
15A45, 34K38, 35A23
\end{AMS}

\pagestyle{myheadings}
\thispagestyle{plain}
\markboth{X., Y. Y. Han, Y. T. Wang, C. Gong}{WOPs-Based Generalization of Wirtinger-Type Integral Inequalities}

\section{Introduction}\label{section-1}

Time delays are inherent in many nature's processes and systems,
for example, spread of infectious diseases and epidemics \cite{wang2014global},
population dynamics systems \cite{JMAA-2015-415},
neural networks \cite{CNSNS-2013-1246,JFI-2013-966},
vehicle active suspension \cite{JSSMS-C-2014-1206}, and
biological and chemical systems \cite{N-2014-105,Zhang.Wu.Zou-IEEE-T-CBB}.
Since time delays are generally regarded as one of main sources of instability and poor performance \cite{N-2015-199,Int.j.RNC-2011-preprint}, the stability analysis issue of time-delay systems is important and has received considerable attention
(see \cite{IEEE-T-CBB-2015-398,A-2015-189,JFI-2015-1378,NCA-2013,NA-RWA-2012-2188} and the references therein).

Most of the results on stability analysis of delayed con\-ti\-nu\-ous-time
systems are obtained by the
Lyapunov-Krasovskii functional (LKF) approach \cite{Gu.Kharitonov.Chen(2003)}.
A key step of the LKF approach is how to construct LKF and to bound its derivative.
It is well-known that an indispensable part of LKF is some integer items like
\begin{eqnarray}\label{15aug261}
\mathcal{I}_m(w_t):=\int_{a}^b(s-a)^mw_t^\mathrm{T}(s)Rw_t(s)\mathrm{d}s,\ t\ge 0,\nonumber
\end{eqnarray}
where $w_t:[a,b]\rightarrow \mathbb{R}^n$ is defined by $w_t(s)=w(t+s)$ for all $s\in [a,b]$, $w:[0,+\infty)\rightarrow \mathbb{R}^n$ is a continuous function, $R$ is a real symmetric positive definite matrix, and $m$ is a nonnegative integer.
It is clear that $\mathcal{I}_0(w_t)=\int_a^bw_t^\mathrm{T}(s)Rw_t(s)\mathrm{d}s$ and
\begin{eqnarray}\label{15sep11222}
\mathcal{I}_m(w_t)=m!\!\!\int_{a}^b\!\int_{\theta_1}^b\!\!\cdots\!\!\int_{\theta_m}^b\!\!w_t^\mathrm{T}(s)Rw_t(s)\mathrm{d}s\mathrm{d}\theta_m\cdots\mathrm{d}\theta_{1}\nonumber
\end{eqnarray}
for $m\ge 1$.
Since
\begin{eqnarray}\label{15sep081}
\frac{\mathrm{d}}{\mathrm{d}t}\mathcal{I}_0(w_t)=w_t^\mathrm{T}(b)Rw_t(b)-w_t^\mathrm{T}(a)Rw_t(a)\nonumber
\end{eqnarray}
and
\begin{eqnarray}\label{15aug263}
\frac{\mathrm{d}}{\mathrm{d}t}\mathcal{I}_m(w_t)\hspace*{-.5mm}=\hspace*{-.5mm}(b\hspace*{-.5mm}-\hspace*{-.5mm}a)^mw_t^\mathrm{T}\hspace*{-.5mm}(b)Rw_t\hspace*{-.5mm}(b)\hspace*{-.5mm}-\hspace*{-.5mm}m\mathcal{I}_{m-1}\hspace*{-.5mm}(w_t),\ m\hspace*{-.5mm}\ge \hspace*{-.5mm}1,\nonumber
\end{eqnarray}
the conservativeness of resulting stability criterion relies mainly on the lower bound to $\mathcal{I}_{m-1}(w_t)$ for $m\ge 1$. Usually, the so-called Jensen's inequalities (JIs)  \cite{Gu(2000),IJRNC-2009-1364,AMC-2013-714} are applied to bound $\mathcal{I}_k(w_t)$ for any nonnegative integer $k$.

Recently, some new integral inequalities, spectrally Wir\-ti\-nger-type integral inequalities (WTIIs), have been proposed to improve Jensen's inequalities (i.e., to give more accurate lower bounds of $\mathcal{I}_m(w_t)$ or $\mathcal{I}_m(\dot{w}_t)$) (see \cite{Gu(2000),IJRNC-2009-1364,AMC-2013-714,A-2013-2860,ECC-2014-448,CDC-2013-946,SCL-2015-1,ECC-2013,
A-2015-204,JFI-2015stability,IEEE-T-AC-2015-free,A-2015-189,CNSNS-2013-1246,DDNS-2013-793686,JFI-2015-1378,JFI-2015enhanced,IFAC-2012,NN-2014-57,IET-CTA-2014-1054,IEEE-T-CBB-2015-398,IFAC-2014} and the references therein).
It is shown by Gyurkovics \cite{A-2015-44} that the lower bound of $\mathcal{I}_0(\dot{w}_t)$ given in \cite{A-2013-2860}
 is more accurate than ones in \cite{CNSNS-2013-1246,DDNS-2013-793686}, while the estimations to $\mathcal{I}_0(\dot{w}_t)$ obtained in \cite{A-2013-2860,IEEE-T-AC-2015-free} are equivalent.

In this present paper, we aim in reducing the conservativeness of LKF approach by investigating new integral inequalities based on weighted orthogonal polynomials (WOPs) which is a generalization of those JIs and WTIIs mentioned above as special cases.

This paper is organized as follows: In Section \ref{15sep10-section-1}, we will first introduce a class of WOPs, and thereby investigate WOPs-based inequality inequalities. Discussions of the relation between the WOPs-based inequality inequalities and the JIs and WTIIs in
\cite{Gu(2000),IJRNC-2009-1364,AMC-2013-714,A-2013-2860,ECC-2014-448,CDC-2013-946,SCL-2015-1,ECC-2013,
A-2015-204,JFI-2015stability,IEEE-T-AC-2015-free,A-2015-189,CNSNS-2013-1246,DDNS-2013-793686,JFI-2015-1378,JFI-2015enhanced,IFAC-2012}
 will be presented in Section\ref{15sep11-section-1}.
 We will conclude  the results of this paper in Section \ref{15jun06-section-1}.

{\bf\emph{Notations:}} The notations used throughout this paper are fairly standard.
Let $\mathbb{R}^{n\times m}$ be the set of all $n\times m$ matrices over the real number field $\mathbb{R}$.
For a matrix $X\in\mathbb{R}^{n\times n}$, the symbols $X^{-1}$ and $X^\mathrm{T}$ denote the inverse and transpose of $X$, respectively.
Set $\mathbb{R}^n=\mathbb{R}^{n\times 1}$ and $X^{-\mathrm{T}}=(X^{-1})^\mathrm{T}$.
The \emph{Kronecker product}, $A\otimes B$, of two matrices $A=[a_{ij}]\in\mathbb{R}^{m\times n}$ and $B\in\mathbb{R}^{p\times q}$ is the $mp\times nq$ block matrix:
\begin{eqnarray}\label{15sep111}
\begin{bmatrix}
a_{11} \mathbf{B} & \cdots & a_{1n}\mathbf{B} \\
\vdots & \ddots & \vdots \\
a_{m1} \mathbf{B} & \cdots & a_{mn} \mathbf{B}
\end{bmatrix}.\nonumber
\end{eqnarray}
Denote by $\mathrm{diag}(\cdots)$ and $\mathrm{col}(\cdots)$ the (block) diagonal matrix and column matrix formed by the elements in brackets, respectively.

\section{WOPs-based integral inequalities}\label{15sep10-section-1}

In this section we will investi\-gate novel WOPs-based integral inequalities, which is a generalization of many JIs and WTIIs in literature.

\subsection{WOPs}

If $p(s)=\sum_{k=0}^{\mathcal{N}}a_ks^k$ and $a_{\mathcal{N}}\not= 0$, then we say $p(s)$ is a polynomial of degree $\mathcal{N}$.
Let $\mathbb{R}[s]_{\mathcal{N}}$ denote the linear space of polynomials with real coefficients of degree not exceeding $\mathcal{N}$.
Set $f_k(s)=(s-a)^k$, $k=0,1,2,\dots,\mathcal{N}$.
Then $\{f_k(s)\}_{k=0}^\mathcal{N}$ is a basis of $\mathbb{R}[s]_{\mathcal{N}}$.
For an arbitrary but fixed nonnegative integer $m$,
define an inner product, $(\cdot,\cdot)_{m}$, on $\mathbb{R}[s]_n$ by
\begin{eqnarray}\label{15aug161}
(p(s),q(s))_{m}=\int_{a}^b(s-a)^mp(s)q(s)\mathrm{d}s
\end{eqnarray}
for any $p(s),q(s)\in \mathbb{R}[s]_{\mathcal{N}}$.
Let $\{p_{km}(s)\}_{k=0}^\mathcal{N}$ be the orthogonal basis of $\mathbb{R}[s]_\mathcal{N}$ which is obtained by applying the Gram-Schmidt orthogonalization process to the basis $\{f_k(s)\}_{k=0}^\mathcal{N}$, that is,
\begin{eqnarray}\label{l062}
p_{0m}(s)&\hspace*{-1mm}=&\hspace*{-1mm}f_0(s),\nonumber\\
p_{im}(s)&\hspace*{-1mm}=&\hspace*{-1mm}f_i(s)-\sum_{j=0}^{i-1}\frac{g_{ijm}}{\chi_{jm}}p_{jm}(s),\ i=1,2,\dots,\mathcal{N},
\end{eqnarray}
where
\begin{eqnarray}\label{l072}
g_{ijm}=(f_i(s),p_{jm}(s))_m,\ \chi_{jm}=(p_{jm}(s),p_{jm}(s))_m.
\end{eqnarray}
Then $\{p_{km}(s)\}_{k=0}^\mathcal{N}$ are WOPs with the weight function $(s-a)^m$.
Furthermore, (\ref{l062}) can be written as the following matrix form:
\begin{eqnarray}\label{l083}
F_\mathcal{N}(s)=G_{\mathcal{N}m}P_{\mathcal{N}m}(s)
\end{eqnarray}
with
\begin{eqnarray}\label{l084}
P_{\mathcal{N}m}(s)&=&\mathrm{col}(p_{0m}(s),p_{1m}(s),\dots,p_{\mathcal{N}m}(s)),\nonumber
\end{eqnarray}
\begin{eqnarray}\label{15sep082}
F_{\mathcal{N}}(s)&=&\mathrm{col}(f_0(s),f_1(s),\dots,f_{\mathcal{N}}(s)),\nonumber
\end{eqnarray}
where $G_{\mathcal{N}m}$ be the $(\mathcal{N}+1)\times (\mathcal{N}+1)$ unit lower triangular matrix with the $(i,j)$-th entry equal to $\frac{g_{i-1,j-1,m}}{\chi_{j-1,m}}$ for any $i>j$.

\subsection{WOPs-based integral inequalities}

To prove WOPs-based integral inequalities, the following property on Kronecker product of matrices is required.

\begin{lemma}\label{15sep11-lemma-1}\cite{Horn(1985)09051182}
If $A$, $B$, $C$ and $D$ are matrices of appropriate sizes, then $(A \otimes B)(C \otimes D) = (AC) \otimes (BD)$.
\end{lemma}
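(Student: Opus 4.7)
The plan is to verify the identity $(A\otimes B)(C\otimes D) = (AC)\otimes(BD)$ by direct block-wise computation from the definition of the Kronecker product given in the Notations. First I would fix compatible dimensions: take $A = [a_{ij}] \in \mathbb{R}^{m\times n}$, $B\in\mathbb{R}^{p\times q}$, $C = [c_{jk}] \in \mathbb{R}^{n\times r}$, $D\in\mathbb{R}^{q\times s}$. These are the only shapes under which both $AC$ and $BD$ exist and $A\otimes B$ is conformable with $C\otimes D$. Both sides then lie in $\mathbb{R}^{mp\times rs}$, so it suffices to show that they agree when each side is partitioned into $m\times r$ blocks of size $p\times s$.

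Next I would carry out the block multiplication on the left. By definition, the $(i,j)$ block of $A\otimes B$ is $a_{ij}B\in\mathbb{R}^{p\times q}$, and the $(j,k)$ block of $C\otimes D$ is $c_{jk}D\in\mathbb{R}^{q\times s}$. Since the partitions are conformable (the $q$-dimension matches), ordinary block multiplication gives the $(i,k)$ block of $(A\otimes B)(C\otimes D)$ as
\begin{equation*}
\sum_{j=1}^{n}(a_{ij}B)(c_{jk}D) \;=\; \Bigl(\sum_{j=1}^{n}a_{ij}c_{jk}\Bigr)BD \;=\; (AC)_{ik}\,BD,
\end{equation*}
where the scalars $a_{ij}$ and $c_{jk}$ can be pulled outside the matrix product. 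On the other hand, the definition of the Kronecker product gives the $(i,k)$ block of $(AC)\otimes(BD)$ as exactly $(AC)_{ik}\,BD$. Matching blocks gives the claim.

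The argument is essentially bookkeeping; the only mild obstacle is keeping the four index ranges $i\in\{1,\dots,m\}$, $j\in\{1,\dots,n\}$, $k\in\{1,\dots,r\}$, together with the two levels of block structure (outer $m\times r$ blocks, inner $p\times s$ entries), in correspondence. No deeper identity is needed beyond the fact that scalars commute with matrix multiplication, which is precisely what allows the scalar sum to collapse into an entry of $AC$.
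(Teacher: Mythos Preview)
Your argument is correct and is the standard proof of the mixed-product property. The paper does not supply its own proof of this lemma; it simply cites the result from Horn and Johnson, so there is nothing to compare against beyond noting that your block-wise computation is exactly the classical justification.
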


Based on the previous preparation, now we can investigate the following WOPs-based integral inequalities which give lower bounds of $\mathcal{I}_m(w_t)$.

\begin{theorem}\label{15sep08-theorem-1}
For given integers $\mathcal{N}\ge 0$ and $m\ge 0$, a symmetric positive definite matrix $R\in \mathbb{R}^{n\times n}$, and a continuous function $\omega:[a,b]\rightarrow \mathbb{R}^n$, the following inequality holds:
\begin{eqnarray}\label{15aug0427}
\mathcal{I}_m(w_t)\ge
F_{\mathcal{N}m}^\mathrm{T}(w_t)(\Xi_{\mathcal{N}m} \otimes R)F_{\mathcal{N}m}(w_t)
\end{eqnarray}
with
\begin{eqnarray}\label{15aug0426}
F_{\mathcal{N}m}(w_t)=\int_{a}^b(s-a)^m(F_\mathcal{N}(s)\otimes w_t(s))\mathrm{d}s,
\end{eqnarray}
\begin{eqnarray}\label{15aug0428}
\Xi_{\mathcal{N}m}=G_{\mathcal{N}m}^{-\mathrm{T}}\Lambda_{\mathcal{N}m}^{-1}G_{\mathcal{N}m}^{-1},
\end{eqnarray}
\begin{eqnarray}\label{15aug0414}
\Lambda_{\mathcal{N}m}=\mathrm{diag}(\chi_{0m},\chi_{1m},\chi_{2m},\dots,\chi_{\mathcal{N}m}),
\end{eqnarray}
and $F_\mathcal{N}(s)$, $\chi_{km}$ and $G_{\mathcal{N}m}$ are defined as previously.
\end{theorem}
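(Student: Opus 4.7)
The plan is to exploit the orthogonality of $\{p_{km}(s)\}_{k=0}^{\mathcal{N}}$ in the weighted inner product (\ref{15aug161}) by subtracting from $w_t(s)$ its best approximation in the span of these polynomials (with vector-valued coefficients), then squaring in the norm induced by $R$ and integrating against the weight $(s-a)^m$. Concretely, I introduce auxiliary vectors $\alpha_0,\alpha_1,\dots,\alpha_{\mathcal{N}}\in\mathbb{R}^n$ and consider
\begin{eqnarray}
\eta(s)=w_t(s)-\sum_{k=0}^{\mathcal{N}}p_{km}(s)\alpha_k.\nonumber
\end{eqnarray}
Since $R\succ 0$, the quantity $\int_a^b(s-a)^m\eta^{\mathrm{T}}(s)R\eta(s)\mathrm{d}s$ is nonnegative. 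Expanding this quadratic form and invoking the orthogonality $(p_{jm},p_{km})_m=\delta_{jk}\chi_{km}$ collapses the double sum to a diagonal one, giving the pointwise bound
\begin{eqnarray}
\mathcal{I}_m(w_t)\ge 2\sum_{k=0}^{\mathcal{N}}\alpha_k^{\mathrm{T}}R\beta_k-\sum_{k=0}^{\mathcal{N}}\chi_{km}\alpha_k^{\mathrm{T}}R\alpha_k,\nonumber
\end{eqnarray}
where $\beta_k:=\int_a^b(s-a)^mp_{km}(s)w_t(s)\mathrm{d}s$. This is concave in each $\alpha_k$, so the optimal choice $\alpha_k=\beta_k/\chi_{km}$ yields
\begin{eqnarray}
\mathcal{I}_m(w_t)\ge\sum_{k=0}^{\mathcal{N}}\tfrac{1}{\chi_{km}}\beta_k^{\mathrm{T}}R\beta_k.\nonumber
\end{eqnarray}

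Next I would recast the right-hand side in Kronecker form. Stacking $\beta:=\mathrm{col}(\beta_0,\dots,\beta_{\mathcal{N}})=\int_a^b(s-a)^m(P_{\mathcal{N}m}(s)\otimes w_t(s))\mathrm{d}s$, the bound above reads $\beta^{\mathrm{T}}(\Lambda_{\mathcal{N}m}^{-1}\otimes R)\beta$. Using the identity (\ref{l083}), $F_{\mathcal{N}}(s)=G_{\mathcal{N}m}P_{\mathcal{N}m}(s)$, together with Lemma \ref{15sep11-lemma-1} applied to pull $G_{\mathcal{N}m}\otimes I_n$ outside the integral in (\ref{15aug0426}), I obtain $F_{\mathcal{N}m}(w_t)=(G_{\mathcal{N}m}\otimes I_n)\beta$, hence $\beta=(G_{\mathcal{N}m}^{-1}\otimes I_n)F_{\mathcal{N}m}(w_t)$ because $G_{\mathcal{N}m}$ is unit lower triangular and therefore invertible.

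Substituting this expression for $\beta$ and applying the mixed-product rule from Lemma \ref{15sep11-lemma-1} twice gives
\begin{eqnarray}
\sum_{k=0}^{\mathcal{N}}\tfrac{1}{\chi_{km}}\beta_k^{\mathrm{T}}R\beta_k=F_{\mathcal{N}m}^{\mathrm{T}}(w_t)\bigl(G_{\mathcal{N}m}^{-\mathrm{T}}\Lambda_{\mathcal{N}m}^{-1}G_{\mathcal{N}m}^{-1}\otimes R\bigr)F_{\mathcal{N}m}(w_t),\nonumber
\end{eqnarray}
which, by the definition (\ref{15aug0428}) of $\Xi_{\mathcal{N}m}$, is exactly the right-hand side of (\ref{15aug0427}). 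I expect the only non-routine step to be the careful bookkeeping in the Kronecker algebra — in particular, verifying that the integral defining $F_{\mathcal{N}m}(w_t)$ commutes with $G_{\mathcal{N}m}\otimes I_n$ and that the inner product orthogonality translates cleanly to the diagonal structure of $\Lambda_{\mathcal{N}m}$; everything else is a quadratic completion argument powered by $R\succ 0$.
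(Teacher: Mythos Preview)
Your proof is correct and follows essentially the same route as the paper's: the paper defines the residual $z(s)=w_t(s)-\sum_{k=0}^{\mathcal{N}}\chi_{km}^{-1}p_{km}(s)\pi_{km}(w_t)$ directly (i.e., with your $\alpha_k$ already set to their optimizers $\beta_k/\chi_{km}$), uses orthogonality to obtain $\mathcal{I}_m(w_t)\ge\Pi_{\mathcal{N}m}^{\mathrm{T}}(w_t)(\Lambda_{\mathcal{N}m}^{-1}\otimes R)\Pi_{\mathcal{N}m}(w_t)$, and then performs the identical Kronecker change of basis $\Pi_{\mathcal{N}m}(w_t)=(G_{\mathcal{N}m}^{-1}\otimes I_n)F_{\mathcal{N}m}(w_t)$. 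Your $\beta_k$ and $\beta$ coincide with the paper's $\pi_{km}(w_t)$ and $\Pi_{\mathcal{N}m}(w_t)$, so the two arguments are the same up to whether one optimizes over $\alpha_k$ or plugs in the optimum from the start.
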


\begin{proof}
Set
\begin{eqnarray}\label{15aug0420}
z(s)=w_t(s)-\sum_{k=0}^\mathcal{N}\chi_{km}^{-1} p_{km}(s)\pi_{km}(w_t)\nonumber
\end{eqnarray}
with
\begin{eqnarray}\label{15aug0413}
\pi_{km}(w_t)=\int_{a}^b(s-a)^mp_{km}(s)w_t(s)\mathrm{d}s.
\end{eqnarray}
Then it follows from (\ref{15aug161}), (\ref{l072}) and the orthogonality of $\{p_{km}(s)\}_{k=0}^\mathcal{N}$ under the weight function $(s-a)^m$ that
\begin{eqnarray}\label{15aug0422}
\mathcal{I}_m(z)=\mathcal{I}_m(w_t)-\sum_{k=0}^\mathcal{N}\chi_{km}^{-1}\pi_{km}^\mathrm{T}(w_t)R\pi_{km}(w_t).\nonumber
\end{eqnarray}
This, together with $\mathcal{I}_m(z)\ge 0$, implies that
\begin{eqnarray}\label{15aug0423}
\mathcal{I}_m(w_t)&\ge&\sum_{k=0}^\mathcal{N}\chi_{km}^{-1}\pi_{km}^\mathrm{T}(w_t)R\pi_{km}(w_t)\nonumber\\
&=&\Pi_{\mathcal{N}m}^\mathrm{T}(w_t)(\Lambda_{\mathcal{N}m}^{-1}\otimes R)\Pi_{\mathcal{N}m}(w_t),
\end{eqnarray}
where
\begin{eqnarray}\label{15aug0412}
\Pi_{\mathcal{N}m}(w_t)=\mathrm{col}(\pi_{0m}(w_t),\pi_{1m}(w_t),\dots,\pi_{\mathcal{N}m}(w_t)). \nonumber
\end{eqnarray}
Since $G_{\mathcal{N}m}$ is a unit lower triangular matrix, it follows from (\ref{l083}), (\ref{15aug0426}) and (\ref{15aug0413}) that
\begin{eqnarray}\label{15aug0424}
\Pi_{\mathcal{N}m}(w_t)
&=&
\int_{a}^b(s-a)^m(P_{\mathcal{N}m}(s)\otimes w_t(s))\mathrm{d}s\nonumber\\
&=&\int_{a}^b(s-a)^m(G_{\mathcal{N}m}^{-1}F_\mathcal{N}(s)\otimes w_t(s))\mathrm{d}s\nonumber\\
&=&(G_{\mathcal{N}m}^{-1}\otimes I_n)F_{\mathcal{N}m}(w_t).\nonumber
\end{eqnarray}
This, together with (\ref{15aug0423}) and Lemma \ref{15sep11-lemma-1}, completes the proof.
\end{proof}

Since the inequality (\ref{15aug0427}) is obtained by using the WOPs (\ref{l062}), we will refer to (\ref{15aug0427}) as \emph{WOPs-based integral inequalities}.

\section{Discussions}\label{15sep11-section-1}

In this section we will discuss the relation between the WOPs-based integral inequalities in Theorem \ref{15sep08-theorem-1} and the JIs and WTIIs in
\cite{Gu(2000),IJRNC-2009-1364,AMC-2013-714,A-2013-2860,ECC-2014-448,CDC-2013-946,SCL-2015-1,ECC-2013,
A-2015-204,JFI-2015stability,IEEE-T-AC-2015-free,A-2015-189,CNSNS-2013-1246,DDNS-2013-793686,JFI-2015-1378,JFI-2015enhanced,IFAC-2012}.

When $(\mathcal{N},m)=(2,0)$, by employing the symbolic operations of MATLAB, one can easily check that
\begin{eqnarray}\label{15sep083}
F_2(s)=\begin{bmatrix}
         1\\
     s-a\\
 (s-a)^2
 \end{bmatrix},\
G_{20}^{-1}=\begin{bmatrix}
     1&  0& 0\\
  \frac{a-b}{2}&  1& 0\\
 \frac{(b-a)^2}{6}& a-b& 1
\end{bmatrix},\nonumber
\end{eqnarray}
\begin{eqnarray}\label{15sep084}
\Lambda_{20}=\frac{1}{b-a}\mathrm{diag}(1,\frac{12}{(b-a)^2},\frac{180}{(b-a)^4}),\nonumber
\end{eqnarray}
and hence
\begin{eqnarray}\label{15sep085}
F_{20}(w_t)=\begin{bmatrix}
\int_{a}^bw_t(s)\mathrm{d}s\\
\int_{a}^b\int_{\alpha}^bw_t(s)\mathrm{d}s\mathrm{d}\alpha\\
2\int_{a}^b\int_{\beta}^b\int_{\alpha}^bw_t(s)\mathrm{d}s\mathrm{d}\alpha\mathrm{d}\beta\nonumber
\end{bmatrix},
\end{eqnarray}
\begin{eqnarray}\label{15sep086}
\Xi_{20}(R)=\frac{1}{b-a}\left(\delta_1R\delta_1^\mathrm{T}+3\delta_2R\delta_2^\mathrm{T}+5\delta_3R\delta_3^\mathrm{T}\right),\nonumber
\end{eqnarray}
where
\begin{eqnarray}\label{15sep087}
\delta_1=\begin{bmatrix}
1\\
0\\
0
\end{bmatrix},\ \delta_2=\begin{bmatrix}
1\\
\frac{2}{a-b}\\
0
\end{bmatrix},\ \delta_3=\begin{bmatrix}
1\\
\frac{6}{a-b}\\
\frac{6}{(b-a)^2}
\end{bmatrix}.\nonumber
\end{eqnarray}
This, together with Theorem \ref{15sep08-theorem-1}, yields the following result.

\begin{corollary}\label{15sep08-corollary-1}
When $(\mathcal{N},m)=(2,0)$,  the inequality (\ref{15aug0427}) turns into \cite[(13)]{JFI-2015-1378}, that is,
\begin{eqnarray}\label{15sep088}
\mathcal{I}_0(w_t)\ge
\frac{1}{b-a}\left(\Omega_0^\mathrm{T}R\Omega_0+3\Omega_1^\mathrm{T}R\Omega_1+5\Omega_2^\mathrm{T}R\Omega_2\right),
\end{eqnarray}
where
\begin{eqnarray}\label{15sep089}
\Omega_0&=&\int_{a}^bw_t(s)\mathrm{d}s,\nonumber\\
\Omega_1&=&\int_{a}^bw_t(s)\mathrm{d}s-\frac{2}{b-a}\int_{a}^b\int_{\alpha}^bw_t(s)\mathrm{d}s\mathrm{d}\alpha,\nonumber\\
\Omega_2&=&\int_{a}^bw_t(s)\mathrm{d}s-\frac{6}{b-a}\int_{a}^b\int_{\alpha}^bw_t(s)\mathrm{d}s\mathrm{d}\alpha\nonumber\\
&&+\frac{12}{(b-a)^2}\int_{a}^b\int_{\beta}^b\int_{\alpha}^bw_t(s)\mathrm{d}s\mathrm{d}\alpha\mathrm{d}\beta.\nonumber
\end{eqnarray}
\end{corollary}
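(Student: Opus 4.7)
The plan is to specialize Theorem \ref{15sep08-theorem-1} to $(\mathcal{N},m)=(2,0)$ and verify by direct computation that the resulting quadratic form in $F_{20}(w_t)$ matches the right-hand side of (\ref{15sep088}). The work splits naturally into three blocks: (i) produce the explicit WOPs together with the matrices $G_{20}$ and $\Lambda_{20}$; (ii) rewrite the raw moments $\int_a^b(s-a)^k w_t(s)\,\mathrm{d}s$ as iterated integrals, so that $F_{20}(w_t)$ takes the form shown just before the corollary; and (iii) rearrange $F_{20}^\mathrm{T}(w_t)(\Xi_{20}\otimes R)F_{20}(w_t)$ as a sum of three rank-one contributions and identify each term with $\Omega_k^\mathrm{T} R\Omega_k$.

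For step (i) I apply the Gram--Schmidt recurrence (\ref{l062})--(\ref{l072}) to the basis $\{1,\,s-a,\,(s-a)^2\}$ under the unweighted inner product $(p,q)_0=\int_a^b p(s)q(s)\,\mathrm{d}s$. A routine computation yields
\begin{eqnarray}
p_{00}(s)&=&1,\nonumber\\
p_{10}(s)&=&(s-a)-\tfrac{b-a}{2},\nonumber\\
p_{20}(s)&=&(s-a)^2-(b-a)(s-a)+\tfrac{(b-a)^2}{6},\nonumber
\end{eqnarray}
together with $\chi_{00}=b-a$, $\chi_{10}=(b-a)^3/12$, and $\chi_{20}=(b-a)^5/180$, which are exactly the diagonal entries of $\Lambda_{20}$. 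Reading off the coefficients in (\ref{l062}) produces $G_{20}$ as a unit lower-triangular matrix; inverting by forward substitution recovers the $G_{20}^{-1}$ already listed in the excerpt.

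For step (ii) I invoke Fubini's theorem in the forms $\int_a^b(s-a)w_t(s)\,\mathrm{d}s=\int_a^b\!\int_\alpha^b w_t(s)\,\mathrm{d}s\,\mathrm{d}\alpha$ and $\int_a^b(s-a)^2 w_t(s)\,\mathrm{d}s=2\int_a^b\!\int_\beta^b\!\int_\alpha^b w_t(s)\,\mathrm{d}s\,\mathrm{d}\alpha\,\mathrm{d}\beta$. Combined with the definition (\ref{15aug0426}), these identities account for the factor $2$ that appears in the third block of $F_{20}(w_t)$ and justify the displayed expression for that vector.

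Finally, step (iii) is the heart of the matter. From (\ref{15aug0428}) one has $\Xi_{20}=G_{20}^{-\mathrm{T}}\Lambda_{20}^{-1}G_{20}^{-1}$; expanding the middle diagonal as a sum of rank-one projectors gives
\[
\Xi_{20}=\frac{1}{b-a}\bigl(\delta_1\delta_1^\mathrm{T}+3\delta_2\delta_2^\mathrm{T}+5\delta_3\delta_3^\mathrm{T}\bigr),
\]
where each $\delta_{k+1}$ is the $(k+1)$st column of $G_{20}^{-\mathrm{T}}$ appropriately rescaled so that the diagonal factor $\chi_{k0}^{-1}$ is absorbed into the common prefactor $1/(b-a)$; these three vectors coincide with the $\delta_1,\delta_2,\delta_3$ displayed in the excerpt. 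Lemma \ref{15sep11-lemma-1} then lets me pull the Kronecker product through and recognize $\Omega_k=(\delta_{k+1}^\mathrm{T}\otimes I_n)F_{20}(w_t)$, so that (\ref{15aug0427}) specializes to (\ref{15sep088}). The only real obstacle is bookkeeping: one must check that the three diagonal factors $1/(b-a)$, $12/(b-a)^3$, $180/(b-a)^5$ in $\Lambda_{20}^{-1}$ combine with the appropriately scaled columns of $G_{20}^{-\mathrm{T}}$ to produce precisely the scalars $1$, $3$, $5$. Since this algebra is mechanical but error-prone, it is natural -- as the authors note -- to delegate the final verification to a symbolic computation package.
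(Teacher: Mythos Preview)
Your proposal is correct and follows essentially the same route as the paper: both specialize Theorem~\ref{15sep08-theorem-1} to $(\mathcal{N},m)=(2,0)$, compute $G_{20}^{-1}$, $\Lambda_{20}$, $F_{20}(w_t)$, and the rank-one decomposition $\Xi_{20}=\frac{1}{b-a}\sum_{k=0}^{2}(2k+1)\delta_{k+1}\delta_{k+1}^{\mathrm{T}}$, and then read off $\Omega_k=(\delta_{k+1}^{\mathrm{T}}\otimes I_n)F_{20}(w_t)$. The only difference is cosmetic: the paper delegates the intermediate computations to MATLAB, whereas you carry out the Gram--Schmidt, the Fubini rewriting, and the rescaling of columns by hand.
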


Similar to Corollary \ref{15sep08-corollary-1}, the following several corollaries can be easily derived from Theorem \ref{15sep08-theorem-1}.

\begin{corollary}\label{15sep08-corollary-2}
When $(\mathcal{N},m)=(1,0)$, the inequality (\ref{15aug0427}) turns into the so-called Wirtinger-based integral inequality \cite[(8)]{JFI-2015-1378}, that is,
\begin{eqnarray}\label{15sep0810}
\mathcal{I}_0(w_t)\ge
\frac{1}{b-a}\left(\Omega_0^\mathrm{T}R\Omega_0+3\Omega_1^\mathrm{T}R\Omega_1\right),
\end{eqnarray}
where $\Omega_0$ and $\Omega_1$ are defined as in Corollary \ref{15sep08-corollary-1}.
\end{corollary}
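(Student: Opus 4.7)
The plan is to specialize Theorem \ref{15sep08-theorem-1} to $(\mathcal{N},m)=(1,0)$ and verify that every ingredient collapses to what appears in (\ref{15sep0810}), in exact parallel with the $(2,0)$ derivation that precedes Corollary \ref{15sep08-corollary-1}. First I would apply the Gram-Schmidt recursion (\ref{l062}) with the inner product (\ref{15aug161}) at $m=0$ to the truncated basis $\{1,\,s-a\}$, obtaining $p_{00}(s)=1$ and $p_{10}(s)=(s-a)-(b-a)/2$, together with the norms $\chi_{00}=b-a$ and $\chi_{10}=(b-a)^3/12$. This produces $\Lambda_{10}=\mathrm{diag}(b-a,(b-a)^3/12)$ and identifies $G_{10}^{-1}$ as the $2\times 2$ unit lower-triangular matrix whose only off-diagonal entry is $(a-b)/2$.

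Next I would assemble $\Xi_{10}$ from (\ref{15aug0428}). Computing $G_{10}^{-\mathrm{T}}\Lambda_{10}^{-1}G_{10}^{-1}$ and regrouping along the columns of $G_{10}^{-\mathrm{T}}$ yields
\[
\Xi_{10} = \tfrac{1}{b-a}\bigl(\delta_1\delta_1^{\mathrm{T}}+3\delta_2\delta_2^{\mathrm{T}}\bigr),\qquad \delta_1=\begin{bmatrix}1\\0\end{bmatrix},\ \delta_2=\begin{bmatrix}1\\ \tfrac{2}{a-b}\end{bmatrix},
\]
which is precisely the truncation to the first two terms of the decomposition of $\Xi_{20}$ exhibited just before Corollary \ref{15sep08-corollary-1}. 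From (\ref{15aug0426}), $F_{10}(w_t)$ stacks the blocks $\int_a^b w_t(s)\,\mathrm{d}s$ and $\int_a^b (s-a)\,w_t(s)\,\mathrm{d}s$; switching the order of integration rewrites the second block as $\int_a^b\!\int_\alpha^b w_t(s)\,\mathrm{d}s\,\mathrm{d}\alpha$.

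Substituting into (\ref{15aug0427}) and applying Lemma \ref{15sep11-lemma-1} to push the Kronecker factors through gives $(\delta_1^{\mathrm{T}}\otimes I_n)F_{10}(w_t)=\Omega_0$ and $(\delta_2^{\mathrm{T}}\otimes I_n)F_{10}(w_t)=\Omega_1$, so the right-hand side of (\ref{15aug0427}) becomes exactly $\frac{1}{b-a}(\Omega_0^{\mathrm{T}}R\Omega_0+3\Omega_1^{\mathrm{T}}R\Omega_1)$, which is (\ref{15sep0810}). I do not foresee any genuine obstacle: the derivation is a pure specialization of the mechanism already worked out for $\mathcal{N}=2$, and the only bookkeeping worth double-checking is the sign convention in $\delta_2$, which correctly reproduces the $-2/(b-a)$ coefficient inside $\Omega_1$.
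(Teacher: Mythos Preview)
Your proposal is correct and follows exactly the route the paper intends: the paper does not spell out a proof for this corollary but states that it is derived from Theorem \ref{15sep08-theorem-1} ``similar to Corollary \ref{15sep08-corollary-1}'', and your computation is precisely that specialization (truncating the $(2,0)$ data to its first two rows and columns). The bookkeeping for $G_{10}^{-1}$, $\Lambda_{10}$, $\Xi_{10}=\tfrac{1}{b-a}(\delta_1\delta_1^{\mathrm T}+3\delta_2\delta_2^{\mathrm T})$, and the identification of $(\delta_i^{\mathrm T}\otimes I_n)F_{10}(w_t)$ with $\Omega_{i-1}$ all check out.
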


\begin{corollary}\label{15sep08-corollary-3}
When $\mathcal{N}=0$, the inequality (\ref{15aug0427}) turns into the celebrated Jensen's inequalities
(see \cite{Gu(2000)} and \cite{IJRNC-2009-1364} for the cases $m=0$ and $m=1$, respectively; and \cite[Lemma 1]{AMC-2013-714} for the special case $(a,b)=(-d,0)$), that is,
\begin{eqnarray}\label{15aug262}
&&\int_{a}^b\!\int_{\theta_1}^b\!\!\cdots\!\!\int_{\theta_m}^b\!\!w_t^\mathrm{T}(s)Rw_t(s)\mathrm{d}s\mathrm{d}\theta_m\cdots\mathrm{d}\theta_{1}\nonumber\\
&\ge&
\frac{(m+1)!}{(b-a)^{m+1}}\tilde{\Omega}_{m}^\mathrm{T}R\tilde{\Omega}_{m},
\end{eqnarray}
where
\begin{eqnarray}\label{15sep091}
\tilde{\Omega}_{m}=\int_{a}^b\!\int_{\theta_1}^b\!\!\cdots\!\!\int_{\theta_m}^b\!\!w_t(s)\mathrm{d}s\mathrm{d}\theta_m\cdots\mathrm{d}\theta_{1}.
\end{eqnarray}
\end{corollary}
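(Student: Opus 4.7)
The plan is to specialize Theorem~\ref{15sep08-theorem-1} to $\mathcal{N}=0$, compute every quantity in~(\ref{15aug0427}) explicitly for that case, and then rewrite both sides in the iterated-integral form used in the statement of the corollary.

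First I would pin down the objects attached to $\mathcal{N}=0$. Here $\mathbb{R}[s]_0$ is one-dimensional with basis $f_0(s)\equiv 1$, so the Gram--Schmidt recursion~(\ref{l062}) gives $p_{0m}(s)=1$ immediately, whence $G_{0m}=[1]$ and, by~(\ref{l072}),
\begin{equation*}
\chi_{0m}=\int_a^b(s-a)^m\,\mathrm{d}s=\frac{(b-a)^{m+1}}{m+1}.
\end{equation*}
From~(\ref{15aug0414}) and~(\ref{15aug0428}) this yields $\Lambda_{0m}=\chi_{0m}$ and $\Xi_{0m}=(m+1)/(b-a)^{m+1}$, while~(\ref{15aug0426}) collapses to
\begin{equation*}
F_{0m}(w_t)=\int_a^b(s-a)^m w_t(s)\,\mathrm{d}s.
\end{equation*}
Plugging these into~(\ref{15aug0427}) I obtain the intermediate inequality
\begin{equation*}
\mathcal{I}_m(w_t)\ge \frac{m+1}{(b-a)^{m+1}}\Bigl(\int_a^b(s-a)^m w_t(s)\,\mathrm{d}s\Bigr)^{\!\mathrm{T}}R\Bigl(\int_a^b(s-a)^m w_t(s)\,\mathrm{d}s\Bigr).
\end{equation*}

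Second, I would convert the moment integrals to the iterated-integral form appearing in~(\ref{15sep091}). The same Fubini-type rearrangement that justifies the identity for $\mathcal{I}_m(w_t)$ quoted in the introduction (replacing the quadratic integrand $w_t^\mathrm{T} R w_t$ by the vector integrand $w_t$) gives
\begin{equation*}
\int_a^b(s-a)^m w_t(s)\,\mathrm{d}s
=m!\,\tilde{\Omega}_m,
\qquad
\mathcal{I}_m(w_t)=m!\int_a^b\!\int_{\theta_1}^b\!\cdots\!\int_{\theta_m}^b w_t^\mathrm{T}(s)Rw_t(s)\,\mathrm{d}s\,\mathrm{d}\theta_m\cdots\mathrm{d}\theta_1.
\end{equation*}
Substituting into the intermediate inequality and cancelling an $m!$ from both sides produces exactly~(\ref{15aug262}), since $(m+1)(m!)^2/m!=(m+1)!$.

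There is no real obstacle here; the only place that requires care is the bookkeeping between the weighted single integral $\int_a^b(s-a)^m w_t(s)\,\mathrm{d}s$ and the iterated integral $\tilde{\Omega}_m$, which is handled once and for all by the same Cauchy-formula-for-repeated-integration identity that the authors invoke silently for $\mathcal{I}_m(w_t)$. To keep the proof self-contained it would suffice to cite that identity from the introduction and verify the factor $m!$ by a one-line induction on $m$.
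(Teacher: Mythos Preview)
Your proposal is correct and follows exactly the route the paper intends: the paper gives no separate proof for this corollary, stating only that it ``can be easily derived from Theorem~\ref{15sep08-theorem-1}'' in the same manner as Corollary~\ref{15sep08-corollary-1}, i.e.\ by specializing the quantities $G_{\mathcal{N}m}$, $\Lambda_{\mathcal{N}m}$, $\Xi_{\mathcal{N}m}$ and $F_{\mathcal{N}m}(w_t)$ and substituting. Your write-up is in fact more explicit than the paper's, particularly in handling the $m!$ conversion between the weighted single integral and the iterated integral $\tilde{\Omega}_m$.
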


\begin{corollary}\label{15sep12-corollary-1}
When $\mathcal{N}=1$, the inequality (\ref{15aug0427}) turns into
\begin{eqnarray}\label{15sep121}
&&\int_{a}^b\!\int_{\theta_1}^b\!\!\cdots\!\!\int_{\theta_m}^b\!\!w_t^\mathrm{T}(s)Rw_t(s)\mathrm{d}s\mathrm{d}\theta_m\cdots\mathrm{d}\theta_{1}\nonumber\\
&\ge&
\frac{(m+1)!}{(b-a)^{m+1}}\left(\tilde{\Omega}_{m}^\mathrm{T}R\tilde{\Omega}_{m}
+(m+3)(m+1)\Sigma_m^\mathrm{T}R\Sigma_m\right),\nonumber
\end{eqnarray}
where
\begin{eqnarray}\label{15sep131}
\Sigma_m=\tilde{\Omega}_{m}-\frac{m+2}{b-a}\tilde{\Omega}_{m+1},\nonumber
\end{eqnarray}
and $\tilde{\Omega}_m$ and $\tilde{\Omega}_{m+1}$ are defined as in Corollary \ref{15sep08-corollary-3}.
\end{corollary}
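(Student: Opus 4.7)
The plan is to specialize Theorem~\ref{15sep08-theorem-1} to $\mathcal{N}=1$ and carry out the Gram--Schmidt bookkeeping explicitly for an arbitrary nonnegative integer $m$. I would first record $p_{0m}(s)=1$ and $\chi_{0m}=(b-a)^{m+1}/(m+1)$ directly from (\ref{l072}). A short calculation then gives $g_{10m}=(b-a)^{m+2}/(m+2)$, so by (\ref{l062}) the degree-one WOP is $p_{1m}(s)=(s-a)-(m+1)(b-a)/(m+2)$. Expanding the square and using the cancellation $(m+2)^2-(m+1)(m+3)=1$, one obtains $\chi_{1m}=(b-a)^{m+3}/[(m+3)(m+2)^2]$, and hence
\begin{eqnarray*}
G_{1m}^{-1}&=&\begin{bmatrix}1 & 0\\ -\frac{(m+1)(b-a)}{m+2} & 1\end{bmatrix},\\
\Lambda_{1m}^{-1}&=&\mathrm{diag}\!\left(\tfrac{m+1}{(b-a)^{m+1}},\tfrac{(m+3)(m+2)^2}{(b-a)^{m+3}}\right).
\end{eqnarray*}

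Next I would rewrite the vector $F_{1m}(w_t)$ defined in (\ref{15aug0426}) in terms of the iterated integrals $\tilde{\Omega}_k$ from Corollary~\ref{15sep08-corollary-3}. The same repeated-Fubini manipulation that the introduction uses to relate $\mathcal{I}_m$ to an iterated integral yields the identity $\int_a^b(s-a)^kw_t(s)\,\mathrm{d}s=k!\,\tilde{\Omega}_k$, so $F_{1m}(w_t)=\mathrm{col}(m!\,\tilde{\Omega}_m,(m+1)!\,\tilde{\Omega}_{m+1})$. I would then invoke Lemma~\ref{15sep11-lemma-1} exactly as in the proof of Theorem~\ref{15sep08-theorem-1} to recast the right-hand side of (\ref{15aug0427}) as $\Pi_{1m}^\mathrm{T}(w_t)(\Lambda_{1m}^{-1}\otimes R)\Pi_{1m}(w_t)$ with $\Pi_{1m}(w_t)=(G_{1m}^{-1}\otimes I_n)F_{1m}(w_t)$. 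Multiplying out, the first coordinate of $\Pi_{1m}(w_t)$ is $m!\,\tilde{\Omega}_m$ and, after factoring out $-(m+1)!(b-a)/(m+2)$, the second is $-\frac{(m+1)!(b-a)}{m+2}\Sigma_m$; the sign is immaterial since it enters a quadratic form.

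Substituting into the diagonal quadratic form and dividing both sides of the resulting lower bound on $\mathcal{I}_m(w_t)$ by $m!$, using the representation of $\mathcal{I}_m(w_t)$ as an $(m+1)$-fold integral recalled in the introduction, produces exactly the inequality stated in Corollary~\ref{15sep12-corollary-1}. The only slightly delicate point I anticipate is the closed-form evaluation of $\chi_{1m}$, which collapses cleanly thanks to the identity $(m+2)^2-(m+1)(m+3)=1$; everything else is a mechanical specialization of Theorem~\ref{15sep08-theorem-1}, paralleling the derivation of Corollary~\ref{15sep08-corollary-1} for the case $(\mathcal{N},m)=(2,0)$.
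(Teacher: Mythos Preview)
Your proposal is correct and follows precisely the approach the paper indicates: the paper does not give an explicit proof of this corollary but states that it is ``easily derived from Theorem~\ref{15sep08-theorem-1}'' in the same way as Corollary~\ref{15sep08-corollary-1}, and your computation of $G_{1m}^{-1}$, $\Lambda_{1m}^{-1}$, and $F_{1m}(w_t)$ is exactly that specialization carried out by hand (rather than via MATLAB as in the $(\mathcal{N},m)=(2,0)$ case). All of your intermediate formulas---in particular $\chi_{1m}=(b-a)^{m+3}/[(m+3)(m+2)^2]$, the identity $\int_a^b(s-a)^kw_t(s)\,\mathrm{d}s=k!\,\tilde{\Omega}_k$, and the factorization of the second coordinate of $\Pi_{1m}(w_t)$ as $-\frac{(m+1)!(b-a)}{m+2}\Sigma_m$---check out, and the final division by $m!$ yields the stated inequality.
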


\begin{corollary}\label{15sep08-corollary-4}
When $(\mathcal{N},m)=(1,1)$, the inequality (\ref{15aug0427}) turns into \cite[(16)]{JFI-2015-1378}, that is,
\begin{eqnarray}\label{15sep0812}
\mathcal{I}_1(w_t)\ge
\frac{2}{(b-a)^2}\left(\Omega_3^\mathrm{T}R\Omega_3+8\Omega_4^\mathrm{T}R\Omega_4\right),
\end{eqnarray}
where
\begin{eqnarray}\label{15sep0813}
\Omega_3&=&\int_{a}^b\hspace*{-1mm}\int_{\alpha}^b\hspace*{-1mm}w_t(s)\mathrm{d}s\mathrm{d}\alpha,\nonumber\\
\Omega_4&=&\int_{a}^b\hspace*{-1mm}\int_{\alpha}^b\hspace*{-1mm}w_t(s)\mathrm{d}s\mathrm{d}\alpha-\frac{3}{b-a}\int_{a}^b\hspace*{-1mm}\int_{\beta}^b\hspace*{-1mm}\int_{\alpha}^b\hspace*{-1mm}w_t(s)\mathrm{d}s\mathrm{d}\alpha\mathrm{d}\beta.\nonumber
\end{eqnarray}
\end{corollary}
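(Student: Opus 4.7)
The plan is to specialize Theorem \ref{15sep08-theorem-1} at $(\mathcal{N},m)=(1,1)$, explicitly evaluate all the WOP data, and then recognize the resulting quadratic form as the right-hand side of (\ref{15sep0812}). First I would build the WOPs for the weight $(s-a)$ on $[a,b]$. Since $p_{01}(s)=f_0(s)=1$, the substitution $u=s-a$ gives $\chi_{01}=\int_a^b(s-a)\,ds=(b-a)^2/2$ and $g_{101}=\int_a^b(s-a)^2\,ds=(b-a)^3/3$. Hence $p_{11}(s)=(s-a)-\tfrac{2(b-a)}{3}$, and a short calculation yields $\chi_{11}=(b-a)^4/36$. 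This pins down $\Lambda_{11}=\mathrm{diag}\!\bigl((b-a)^2/2,(b-a)^4/36\bigr)$ and the unit lower-triangular matrix
\begin{eqnarray*}
G_{11}=\begin{bmatrix}1 & 0\\ \tfrac{2(b-a)}{3} & 1\end{bmatrix},\quad
G_{11}^{-1}=\begin{bmatrix}1 & 0\\ -\tfrac{2(b-a)}{3} & 1\end{bmatrix}.
\end{eqnarray*}

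Next I would compute $F_{11}(w_t)$ explicitly. With $F_1(s)=\mathrm{col}(1,s-a)$,
\begin{eqnarray*}
F_{11}(w_t)=\mathrm{col}\!\left(\int_a^b(s-a)w_t(s)\,ds,\ \int_a^b(s-a)^2 w_t(s)\,ds\right),
\end{eqnarray*}
and Fubini's theorem rewrites these moments as the iterated integrals $\Omega_3=\int_a^b\!\int_\alpha^b w_t(s)\,ds\,d\alpha$ and $2\tilde\Omega_2=2\int_a^b\!\int_\beta^b\!\int_\alpha^b w_t(s)\,ds\,d\alpha\,d\beta$. This is the step I expect to be the only mildly fiddly one: keeping track of the factor of $2$ from the iterated-integral identity and matching it against the weight $s-a$.

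Then I would exploit the algebraic identity (a direct consequence of Lemma \ref{15sep11-lemma-1})
\begin{eqnarray*}
F_{11}^{\mathrm T}(w_t)(\Xi_{11}\otimes R)F_{11}(w_t)=\bigl((G_{11}^{-1}\otimes I_n)F_{11}(w_t)\bigr)^{\mathrm T}(\Lambda_{11}^{-1}\otimes R)\bigl((G_{11}^{-1}\otimes I_n)F_{11}(w_t)\bigr),
\end{eqnarray*}
which is precisely the factored form already used inside the proof of Theorem \ref{15sep08-theorem-1}. Applying $G_{11}^{-1}\otimes I_n$ to $F_{11}(w_t)$ produces the two blocks $\Omega_3$ and $-\tfrac{2(b-a)}{3}\Omega_3+2\tilde\Omega_2=-\tfrac{2(b-a)}{3}\Omega_4$, because $\Omega_4=\Omega_3-\tfrac{3}{b-a}\tilde\Omega_2$.

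Finally I would substitute into $\sum_{k=0}^{1}\chi_{k1}^{-1}(\cdot)^{\mathrm T}R(\cdot)$: the $k=0$ term contributes $\tfrac{2}{(b-a)^2}\Omega_3^{\mathrm T}R\Omega_3$, while the $k=1$ term contributes $\tfrac{36}{(b-a)^4}\cdot\tfrac{4(b-a)^2}{9}\Omega_4^{\mathrm T}R\Omega_4=\tfrac{16}{(b-a)^2}\Omega_4^{\mathrm T}R\Omega_4$. Factoring out $\tfrac{2}{(b-a)^2}$ yields exactly (\ref{15sep0812}). The whole proof is thus a mechanical specialization of Theorem \ref{15sep08-theorem-1}; the only substantive manipulation is the Fubini rewriting of the weighted moments as the iterated integrals appearing in $\Omega_3$ and $\Omega_4$.
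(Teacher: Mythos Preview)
Your proposal is correct and follows exactly the route the paper intends: the paper states that Corollary \ref{15sep08-corollary-4} is derived ``similarly to Corollary \ref{15sep08-corollary-1}'' by specializing Theorem \ref{15sep08-theorem-1}, and your explicit computation of $\chi_{01},\chi_{11},G_{11}^{-1}$ and the Fubini rewriting of $F_{11}(w_t)$ in terms of $\Omega_3,\Omega_4$ carries this out in full detail, with all constants checking.
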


\begin{corollary}\label{15sep10-corollary-3}
When $m=0$ and $(a,b)=(-h,0)$, the inequality (\ref{15aug0427}) turns into the so-called Bessel--Legendre inequality \cite[Lemma 3]{SCL-2015-1} (i.e., \cite[Lemma 3]{ECC-2014-448}), that is,
\begin{eqnarray}\label{15sep102}
\mathcal{I}_0(w_t)\ge \frac{1}{h}\sum_{k=0}^\mathcal{N}(2k+1)\hat{\Omega}_k^\mathrm{T}R\hat{\Omega}_k,
\end{eqnarray}
where $\hat{\Omega}_k=\int_{-h}^0L_k(s)w_t(s)\mathrm{d}s$, and $\{L_k(s)\}_{k=0}^\mathcal{N}$ is the Legendre orthogonal polynomials defined in \cite[Definition 1]{SCL-2015-1}.
\end{corollary}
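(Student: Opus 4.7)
The plan is to specialize Theorem \ref{15sep08-theorem-1} to the setting $m=0$ and $(a,b)=(-h,0)$, and exploit the fact that the quadratic form on the right-hand side of (\ref{15aug0427}) is invariant under any nonsingular rescaling of the orthogonal basis $\{p_{k0}(s)\}_{k=0}^\mathcal{N}$. This invariance, together with the essential uniqueness of orthogonal polynomials on a given interval with a given weight, will identify the right-hand side with the Legendre expression in (\ref{15sep102}).

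First, I would return to the intermediate inequality
\[
\mathcal{I}_0(w_t)\ \ge\ \sum_{k=0}^{\mathcal{N}}\chi_{k0}^{-1}\pi_{k0}^\mathrm{T}(w_t)R\pi_{k0}(w_t)
\]
that already appears inside the proof of Theorem \ref{15sep08-theorem-1}, with $\pi_{k0}(w_t)=\int_{-h}^0 p_{k0}(s)w_t(s)\mathrm{d}s$ and $\chi_{k0}=\int_{-h}^0 p_{k0}^2(s)\mathrm{d}s$. This is the cleanest form to work with, since every summand is manifestly invariant under the substitution $p_{k0}(s)\mapsto \alpha_k p_{k0}(s)$ with $\alpha_k\ne 0$: the factor $\alpha_k^{2}$ appearing in $\pi_{k0}^\mathrm{T}R\pi_{k0}$ is cancelled by the $\alpha_k^{-2}$ in $\chi_{k0}^{-1}$.

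Next, I would argue that when $m=0$ and $[a,b]=[-h,0]$, the polynomials $\{p_{k0}(s)\}_{k=0}^\mathcal{N}$ and the shifted Legendre polynomials $\{L_k(s)\}_{k=0}^\mathcal{N}$ of \cite[Definition 1]{SCL-2015-1} are proportional. Indeed, both families are orthogonal bases of $\mathbb{R}[s]_\mathcal{N}$ with respect to the same inner product $(p,q)_0=\int_{-h}^0 p(s)q(s)\mathrm{d}s$, and the $k$-th member of each family has degree exactly $k$; hence by the standard uniqueness property of orthogonal polynomials with prescribed degrees there exist nonzero scalars $\alpha_k$ with $p_{k0}(s)=\alpha_k L_k(s)$. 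Consequently $\pi_{k0}(w_t)=\alpha_k \hat{\Omega}_k$ and $\chi_{k0}=\alpha_k^{2}\int_{-h}^0 L_k^{2}(s)\mathrm{d}s$.

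Finally, I would invoke the normalization $\int_{-h}^0 L_k^{2}(s)\mathrm{d}s=h/(2k+1)$ inherent in the Legendre polynomials of \cite[Definition 1]{SCL-2015-1}. Substituting into the displayed bound, the $\alpha_k^{2}$ factors cancel and the right-hand side collapses to $\frac{1}{h}\sum_{k=0}^{\mathcal{N}}(2k+1)\hat{\Omega}_k^\mathrm{T}R\hat{\Omega}_k$, yielding (\ref{15sep102}). The only delicate point is confirming the normalization convention used in \cite[Definition 1]{SCL-2015-1}; any constant discrepancy is, however, absorbed harmlessly into the scalars $\alpha_k$, so the final identity is unaffected. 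Thus the main obstacle is essentially bookkeeping, not analysis: keeping the proportionality constants explicit until the invariance of the quadratic form makes them drop out.
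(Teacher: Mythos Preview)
Your argument is correct and is in fact the natural way to justify this corollary. The paper offers no detailed proof here---it simply asserts that Corollaries~\ref{15sep08-corollary-2}--\ref{15sep10-corollary-3} follow from Theorem~\ref{15sep08-theorem-1} ``similar to Corollary~\ref{15sep08-corollary-1}''---but the explicit symbolic-computation style used for Corollary~\ref{15sep08-corollary-1} cannot be carried out verbatim for general $\mathcal{N}$, so your rescaling-invariance plus uniqueness-of-orthogonal-polynomials route is exactly what is needed.

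One small correction to your closing remark: the scalars $\alpha_k$ cancel \emph{unconditionally}, so they do not absorb any normalization discrepancy. After cancellation the $k$-th summand is precisely $\bigl(\int_{-h}^0 L_k^2(s)\,\mathrm{d}s\bigr)^{-1}\hat{\Omega}_k^{\mathrm{T}}R\hat{\Omega}_k$, and obtaining the coefficient $(2k+1)/h$ in (\ref{15sep102}) genuinely requires $\int_{-h}^0 L_k^2(s)\,\mathrm{d}s=h/(2k+1)$. This is indeed the normalization adopted in \cite[Definition~1]{SCL-2015-1}, so the conclusion stands; just don't claim the identity is insensitive to that convention.
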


If we replace $w_t$ by $\dot{w}_t$ in Corollaries \ref{15sep08-corollary-1}--\ref{15sep08-corollary-4}, then the following several results can be obtained.

\begin{corollary}\label{15sep09-corollary-1}
When $(\mathcal{N},m)=(2,0)$ and $w_t$ is replaced by $\dot{w}_t$,  the inequality (\ref{15aug0427}) turns into \cite[(24)]{JFI-2015-1378}, that is,
\begin{eqnarray}\label{15sep098}
\mathcal{I}_0(\dot{w}_t)\ge
\frac{1}{b-a}\left(\Theta_0^\mathrm{T}R\Theta_0+3\Theta_1^\mathrm{T}R\Theta_1+5\Theta_2^\mathrm{T}R\Theta_2\right),
\end{eqnarray}
where
\begin{eqnarray}\label{15sep099}
\Theta_0&=&w_t(b)-w_t(a),\nonumber\\
\Theta_1&=&w_t(b)+w_t(a)-\frac{2}{b-a}\int_{a}^bw_t(s)\mathrm{d}s,\nonumber\\
\Theta_2&=&w_t(b)-w_t(a)+\frac{6}{b-a}\int_{a}^bw_t(s)\mathrm{d}s\nonumber\\
&&-\frac{12}{(b-a)^2}\int_{a}^b\int_{\alpha}^bw_t(s)\mathrm{d}s\mathrm{d}\alpha.\nonumber
\end{eqnarray}
\end{corollary}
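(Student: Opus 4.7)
The plan is to obtain this corollary as a direct specialization of Corollary \ref{15sep08-corollary-1}, simply by substituting $\dot{w}_t$ for $w_t$ throughout and then rewriting all integrals of $\dot{w}_t$ in terms of boundary values of $w_t$ and integrals of $w_t$ via the Newton--Leibniz formula.

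First I would invoke Corollary \ref{15sep08-corollary-1} applied to the continuous function $\dot{w}_t$ in place of $w_t$ (this is legitimate whenever $w_t$ is sufficiently smooth, which is the standing assumption in the LKF setting). This yields
\begin{eqnarray}
\mathcal{I}_0(\dot w_t)\ge
\frac{1}{b-a}\bigl(\tilde\Omega_0^\mathrm{T}R\tilde\Omega_0+3\tilde\Omega_1^\mathrm{T}R\tilde\Omega_1+5\tilde\Omega_2^\mathrm{T}R\tilde\Omega_2\bigr),\nonumber
\end{eqnarray}
where $\tilde\Omega_i$ are obtained from $\Omega_i$ by replacing $w_t$ with $\dot w_t$.

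Second, I would compute each $\tilde\Omega_i$ in closed form using the fundamental theorem of calculus. For $\tilde\Omega_0$ one immediately has $\tilde\Omega_0=w_t(b)-w_t(a)=\Theta_0$. For $\tilde\Omega_1$, after applying $\int_\alpha^b\dot w_t(s)\mathrm{d}s=w_t(b)-w_t(\alpha)$ inside the double integral and collecting terms, one obtains $\tilde\Omega_1=-\Theta_1$; the minus sign is harmless because the quadratic form $\tilde\Omega_1^\mathrm{T}R\tilde\Omega_1$ is invariant under sign change, so it equals $\Theta_1^\mathrm{T}R\Theta_1$. For $\tilde\Omega_2$ the calculation requires evaluating the triple integral of $\dot w_t$: first reduce the innermost integral by Newton--Leibniz, then the middle integral to obtain $(b-\beta)w_t(b)-\int_\beta^b w_t(\alpha)\mathrm{d}\alpha$, and finally integrate over $\beta\in[a,b]$. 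A careful bookkeeping of the coefficients $1,\ -\tfrac{6}{b-a},\ \tfrac{12}{(b-a)^2}$ shows that the $w_t(b)$ terms telescope (the contributions $-6w_t(b)$ and $+6w_t(b)$ cancel), leaving exactly $\Theta_2$.

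Plugging these three identifications back into the inequality completes the proof. The only non-routine step is the triple-integral reduction for $\tilde\Omega_2$, and the key check there is the cancellation of the pure boundary terms in $w_t(b)$, which is what makes the final expression match \cite[(24)]{JFI-2015-1378}. No new ingredient beyond Theorem \ref{15sep08-theorem-1} and integration by parts is needed.
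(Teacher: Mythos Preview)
Your approach is correct and matches the paper's own treatment: the paper simply states that Corollaries~\ref{15sep09-corollary-1}--\ref{15sep09-corollary-4} are obtained by replacing $w_t$ with $\dot w_t$ in Corollaries~\ref{15sep08-corollary-1}--\ref{15sep08-corollary-4}, which is exactly what you do, and your integration-by-parts reductions (including the sign flip $\tilde\Omega_1=-\Theta_1$ and the cancellation of the $\pm 6w_t(b)$ contributions in $\tilde\Omega_2$) are all correct.
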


\begin{corollary}\label{15sep09-corollary-2}
When $(\mathcal{N},m)=(1,0)$ and $w_t$ is replaced by $\dot{w}_t$,  the inequality (\ref{15aug0427}) turns into \cite[Corollary 5]{A-2013-2860}(i.e., \cite[Lemma 2.1]{ECC-2013} and \cite[Lemma 2.1]{CDC-2013-946} or \cite[(23)]{JFI-2015-1378}), that is,
\begin{eqnarray}\label{15sep0910}
\mathcal{I}_0(\dot{w}_t)\ge
\frac{1}{b-a}\left(\Theta_0^\mathrm{T}R\Theta_0+3\Theta_1^\mathrm{T}R\Theta_1\right),
\end{eqnarray}
where $\Theta_0$ and $\Theta_1$ are defined as in Corollary \ref{15sep09-corollary-1}.
\end{corollary}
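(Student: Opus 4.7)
The plan is to specialize Theorem \ref{15sep08-theorem-1} to $(\mathcal{N},m)=(1,0)$ with $w_t$ replaced by $\dot{w}_t$, and to verify that the resulting quadratic form coincides with the right hand side of (\ref{15sep0910}). Rather than assembling $F_{10}(\dot{w}_t)$ and $\Xi_{10}$ from their matrix definitions, I would work directly with the intermediate form
\begin{equation*}
\mathcal{I}_0(\dot{w}_t) \;\ge\; \sum_{k=0}^{1} \chi_{k0}^{-1}\,\pi_{k0}^{\mathrm{T}}(\dot{w}_t)\,R\,\pi_{k0}(\dot{w}_t)
\end{equation*}
that appears inside the proof of Theorem \ref{15sep08-theorem-1}, since only two orthogonal polynomials are involved.

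The first concrete task is to read off the WOPs associated with the trivial weight $(s-a)^0=1$ on $[a,b]$. A single Gram--Schmidt step produces $p_{00}(s)=1$ and $p_{10}(s)=(s-a)-\tfrac{b-a}{2}$, with squared norms $\chi_{00}=b-a$ and $\chi_{10}=\tfrac{(b-a)^3}{12}$. I would then evaluate the two projections $\pi_{k0}(\dot{w}_t)=\int_a^b p_{k0}(s)\dot{w}_t(s)\,\mathrm{d}s$. The index $k=0$ is immediate and returns $\pi_{00}(\dot{w}_t)=w_t(b)-w_t(a)=\Theta_0$. The index $k=1$ requires one integration by parts on $\int_a^b (s-a)\dot{w}_t(s)\,\mathrm{d}s$, which produces $(b-a)w_t(b)-\int_a^b w_t(s)\,\mathrm{d}s$; combining with the $-\tfrac{b-a}{2}$ contribution from the constant term and factoring out $\tfrac{b-a}{2}$ gives $\pi_{10}(\dot{w}_t)=\tfrac{b-a}{2}\,\Theta_1$.

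Substituting these two projections back into the displayed bound and simplifying the scalar coefficients via $\chi_{00}^{-1}=\tfrac{1}{b-a}$ and $\chi_{10}^{-1}\bigl(\tfrac{b-a}{2}\bigr)^2=\tfrac{3}{b-a}$ reproduces (\ref{15sep0910}) exactly. The argument is essentially bookkeeping; the only step that is not purely algebraic is the integration by parts used to trade $\dot{w}_t$ for the boundary values $w_t(a)$, $w_t(b)$ and the integral $\int_a^b w_t(s)\,\mathrm{d}s$, so that the final bound takes the stated form involving $\Theta_0$ and $\Theta_1$.
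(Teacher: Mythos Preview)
Your proposal is correct and follows essentially the same approach as the paper: specialize Theorem \ref{15sep08-theorem-1} to $(\mathcal{N},m)=(1,0)$ with $\dot{w}_t$ in place of $w_t$, and use integration by parts to rewrite the projections in terms of $w_t(a)$, $w_t(b)$, and $\int_a^b w_t(s)\,\mathrm{d}s$. The only cosmetic difference is that the paper organizes the computation through the matrix data $G_{\mathcal{N}m}^{-1}$, $\Lambda_{\mathcal{N}m}$, $F_{\mathcal{N}m}(\cdot)$ as in the discussion preceding Corollary \ref{15sep08-corollary-1}, whereas you work directly with the equivalent diagonal form $\sum_{k=0}^{1}\chi_{k0}^{-1}\pi_{k0}^{\mathrm{T}}(\dot{w}_t)R\pi_{k0}(\dot{w}_t)$ from the proof of Theorem \ref{15sep08-theorem-1}; the two representations are identical by construction.
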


\begin{corollary}\label{15sep09-corollary-3}
When $\mathcal{N}=0$ and $w_t$ is replaced by $\dot{w}_t$, the inequality (\ref{15aug0427}) turns into the celebrated Jensen's inequalities (see \cite{Gu(2000)} and \cite{IJRNC-2009-1364} for the cases $m=0$ and $m=1$, respectively), that is,
\begin{eqnarray}\label{15sep093}
&&\int_{a}^b\!\int_{\theta_1}^b\!\!\cdots\!\!\int_{\theta_m}^b\!\!\dot{w}_t^\mathrm{T}(s)R\dot{w}_t(s)\mathrm{d}s\mathrm{d}\theta_m\cdots\mathrm{d}\theta_{1}\nonumber\\
&\ge&
\frac{(m+1)!}{(b-a)^{m+1}}\tilde{\Theta}_m^\mathrm{T}R\tilde{\Theta}_m,
\end{eqnarray}
where $\tilde{\Theta}_0=w_t(b)-w_t(a)$ and
\begin{eqnarray}\label{15sep092}
\tilde{\Theta}_m=\frac{(b-a)^m}{m!}w_t(b)-\tilde{\Omega}_{m-1}, m\ge 1.\nonumber
\end{eqnarray}
\end{corollary}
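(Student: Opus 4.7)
The plan is to derive the corollary as a direct specialization of Theorem \ref{15sep08-theorem-1}, with $\mathcal{N}=0$ and $w_t$ replaced by $\dot{w}_t$, and then convert the resulting bound into the form stated in the corollary via integration by parts and the standard simplex-volume identity for iterated integrals.

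First I would instantiate all the ingredients of Theorem \ref{15sep08-theorem-1} at $\mathcal{N}=0$. Here $F_0(s)=[1]$, the Gram--Schmidt procedure trivially gives $p_{0m}(s)=f_0(s)=1$, so $G_{0m}=[1]$, and a one-line computation yields $\chi_{0m}=(1,1)_m=\int_a^b(s-a)^m ds=\frac{(b-a)^{m+1}}{m+1}$. Consequently $\Lambda_{0m}^{-1}=\frac{m+1}{(b-a)^{m+1}}$, $\Xi_{0m}=\chi_{0m}^{-1}=\frac{m+1}{(b-a)^{m+1}}$, and $F_{0m}(\dot w_t)=\int_a^b(s-a)^m\dot w_t(s)\,ds$. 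Substituting into (\ref{15aug0427}) after replacing $w_t$ by $\dot{w}_t$ immediately gives
\begin{eqnarray*}
\mathcal{I}_m(\dot w_t)\ge \frac{m+1}{(b-a)^{m+1}}\,F_{0m}^{\mathrm{T}}(\dot w_t)\,R\,F_{0m}(\dot w_t).
\end{eqnarray*}

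Next I would rewrite both sides in the form demanded by the corollary. On the left, using the iterated-integral form of $\mathcal{I}_m$ recalled in the introduction, $\mathcal{I}_m(\dot w_t)=m!\int_a^b\!\int_{\theta_1}^b\!\cdots\!\int_{\theta_m}^b\dot w_t^{\mathrm{T}}(s)R\dot w_t(s)\,ds\,d\theta_m\cdots d\theta_1$, so dividing by $m!$ produces the left-hand side of (\ref{15sep093}). On the right, I would integrate by parts with $u=(s-a)^m$ and $dv=\dot w_t(s)ds$, obtaining
\begin{eqnarray*}
F_{0m}(\dot w_t)=(b-a)^m w_t(b)-m\int_a^b(s-a)^{m-1}w_t(s)\,ds\quad (m\ge 1),
\end{eqnarray*}
and simply $F_{00}(\dot w_t)=w_t(b)-w_t(a)=\tilde\Theta_0$ when $m=0$. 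Applying Fubini to $\tilde\Omega_{m-1}$ (the inner $(m-1)$-fold iterated integral equals the volume $\frac{(s-a)^{m-1}}{(m-1)!}$ of an ordered simplex) gives the identity $\int_a^b(s-a)^{m-1}w_t(s)\,ds=(m-1)!\,\tilde\Omega_{m-1}$, so for $m\ge 1$ we have $F_{0m}(\dot w_t)=m!\,\tilde\Theta_m$.

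Finally I would combine these steps. Substituting $F_{0m}(\dot w_t)=m!\,\tilde\Theta_m$ into the specialization of Theorem \ref{15sep08-theorem-1} and dividing by $m!$ produces
\begin{eqnarray*}
\int_a^b\!\int_{\theta_1}^b\!\cdots\!\int_{\theta_m}^b\dot w_t^{\mathrm{T}}(s)R\dot w_t(s)\,ds\,d\theta_m\cdots d\theta_1\ge \frac{(m+1)\cdot m!}{(b-a)^{m+1}}\tilde\Theta_m^{\mathrm{T}}R\tilde\Theta_m,
\end{eqnarray*}
which is precisely (\ref{15sep093}) since $(m+1)\cdot m!=(m+1)!$, with the $m=0$ case handled by the same formula using $\tilde\Theta_0=w_t(b)-w_t(a)$. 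None of the steps is genuinely hard; the only place where attention is required is matching the combinatorial constants, in particular verifying that the factor $m!$ picked up from integration by parts squared in the quadratic form exactly cancels the $1/m!$ from converting $\mathcal{I}_m$ into the iterated integral and leaves the clean $(m+1)!/(b-a)^{m+1}$ coefficient.
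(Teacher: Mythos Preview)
Your proposal is correct and follows essentially the same route as the paper: the paper does not give an explicit proof, merely noting that the corollary is obtained by replacing $w_t$ with $\dot w_t$ in Corollary~\ref{15sep08-corollary-3}, which itself is the $\mathcal{N}=0$ specialization of Theorem~\ref{15sep08-theorem-1}. Your computation of $\chi_{0m}$, $\Xi_{0m}$, the integration-by-parts identity $F_{0m}(\dot w_t)=m!\,\tilde\Theta_m$, and the bookkeeping of the factorial constants are all accurate and simply make explicit what the paper leaves to the reader.
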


\begin{corollary}\label{15sep12-corollary-2}
When $\mathcal{N}=1$ and $w_t$ is replaced by $\dot{w}_t$, the inequality (\ref{15aug0427}) turns into
\begin{eqnarray}\label{15sep122}
&&\int_{a}^b\!\int_{\theta_1}^b\!\!\cdots\!\!\int_{\theta_m}^b\!\!\dot{w}_t^\mathrm{T}(s)R\dot{w}_t(s)\mathrm{d}s\mathrm{d}\theta_m\cdots\mathrm{d}\theta_{1}\nonumber\\
&\ge&
\frac{(m+1)!}{(b-a)^{m+1}}\left(\tilde{\Theta}_m^\mathrm{T}R\tilde{\Theta}_m
+(m+1)(m+3)\Psi_m^\mathrm{T}R\Psi_m\right),\nonumber
\end{eqnarray}
where
\begin{eqnarray}\label{15sep132}
\Psi_m=-\frac{(b-a)^m}{(m+1)!}w_t(b)-\tilde{\Theta}_{m-1}+\frac{m+2}{b-a}\tilde{\Theta}_{m},\nonumber
\end{eqnarray}
and $\tilde{\Theta}_m$ and $\tilde{\Theta}_{m-1}$ are defined as in Corollary \ref{15sep09-corollary-3}.
\end{corollary}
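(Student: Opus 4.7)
The plan is to specialize Theorem \ref{15sep08-theorem-1} to $\mathcal N=1$ with $\dot w_t$ in place of $w_t$, and then reduce the resulting bound to the claimed form by combining integration by parts with the $\tilde\Omega$--$\tilde\Theta$ identities of Corollary \ref{15sep09-corollary-3}. Since the right-hand side of the target inequality is already written as a sum of two quadratic forms, it is more convenient to work from the intermediate inequality
\[
\mathcal I_m(\dot w_t)\;\ge\;\chi_{0m}^{-1}\pi_{0m}^{\mathrm T}(\dot w_t)R\pi_{0m}(\dot w_t)+\chi_{1m}^{-1}\pi_{1m}^{\mathrm T}(\dot w_t)R\pi_{1m}(\dot w_t)
\]
that is established inside the proof of Theorem \ref{15sep08-theorem-1}, rather than from the matrix form (\ref{15aug0427}); dividing through by $m!$ converts $\mathcal I_m(\dot w_t)$ into the iterated integral appearing on the left of the claim.

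First I would carry out the Gram--Schmidt step (\ref{l062})--(\ref{l072}) explicitly. A short computation gives $p_{0m}(s)=1$ with $\chi_{0m}=(b-a)^{m+1}/(m+1)$, and $p_{1m}(s)=(s-a)-(m+1)(b-a)/(m+2)$ with $\chi_{1m}=(b-a)^{m+3}/[(m+3)(m+2)^2]$. Integration by parts on $\pi_{0k}(\dot w_t)=\int_a^b(s-a)^k\dot w_t(s)\,\mathrm{d}s$ produces a boundary term together with an integral of $w_t$ against a monomial of degree $k-1$, which by the definitions of $\tilde\Omega$ and $\tilde\Theta$ in Corollary \ref{15sep09-corollary-3} collapses to $\pi_{0k}(\dot w_t)=k!\,\tilde\Theta_k$. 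Combining this identity with the linearity $\pi_{1m}(\dot w_t)=\pi_{0,m+1}(\dot w_t)-[(m+1)(b-a)/(m+2)]\pi_{0m}(\dot w_t)$ immediately yields $\pi_{1m}(\dot w_t)=(m+1)!\bigl[\tilde\Theta_{m+1}-(b-a)\tilde\Theta_m/(m+2)\bigr]$, while the $k=m$ case together with $\chi_{0m}^{-1}$ already produces the $\frac{(m+1)!}{(b-a)^{m+1}}\tilde\Theta_m^{\mathrm T}R\tilde\Theta_m$ piece of the claim after the division by $m!$.

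The main obstacle is the last, purely algebraic, step: matching $\chi_{1m}^{-1}\pi_{1m}^{\mathrm T}R\pi_{1m}/m!$ against the second advertised term $\frac{(m+1)!(m+1)(m+3)}{(b-a)^{m+1}}\Psi_m^{\mathrm T}R\Psi_m$. Counting the factorials and the powers of $(b-a)$ pins down the scalar relation that must hold between $\pi_{1m}(\dot w_t)$ and $\Psi_m$, so the problem reduces to exhibiting $\tilde\Theta_{m+1}-(b-a)\tilde\Theta_m/(m+2)$ as the appropriate scalar multiple of $\Psi_m$. For this I would invoke the defining identity $\tilde\Theta_k=(b-a)^kw_t(b)/k!-\tilde\Omega_{k-1}$ to eliminate $\tilde\Theta_{m+1}$ (equivalently $\tilde\Omega_m$) in favour of $w_t(b)$, $\tilde\Theta_m$ and $\tilde\Theta_{m-1}$, and then match coefficients term by term. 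The arithmetic is elementary but demands careful bookkeeping of the factorials and of the powers of $(b-a)$; this is where the argument is most prone to error, and it is essentially the same bookkeeping used to derive $\Sigma_m$ in Corollary \ref{15sep12-corollary-1}, just with boundary contributions $w_t(b)$ appearing through the integration by parts performed on $\dot w_t$.
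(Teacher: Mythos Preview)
Your overall strategy coincides with the paper's: the paper gives no separate argument for this corollary beyond ``can be easily derived from Theorem~\ref{15sep08-theorem-1}'', and your plan---specialize to $\mathcal N=1$, compute $p_{0m},p_{1m},\chi_{0m},\chi_{1m}$, integrate by parts to get $\pi_{0k}(\dot w_t)=k!\,\tilde\Theta_k$, and identify the two quadratic terms---is precisely that derivation made explicit. All of your intermediate formulas (for $\chi_{km}$, for $p_{1m}$, and for $\pi_{1m}(\dot w_t)=(m+1)!\bigl[\tilde\Theta_{m+1}-\tfrac{b-a}{m+2}\tilde\Theta_m\bigr]$) are correct.

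The gap is in your last step. You propose to ``eliminate $\tilde\Theta_{m+1}$ (equivalently $\tilde\Omega_m$) in favour of $w_t(b)$, $\tilde\Theta_m$ and $\tilde\Theta_{m-1}$''. This is impossible: $\tilde\Theta_{m+1}$ encodes $\tilde\Omega_m$, whereas $\tilde\Theta_m$ and $\tilde\Theta_{m-1}$ encode only $\tilde\Omega_{m-1}$ and $\tilde\Omega_{m-2}$; these iterated integrals of $w_t$ are linearly independent functionals, so no identity of the kind you need exists. What \emph{does} work is to expand \emph{both} $\tilde\Theta_{m+1}$ and $\tilde\Theta_m$ via $\tilde\Theta_k=\tfrac{(b-a)^k}{k!}w_t(b)-\tilde\Omega_{k-1}$, giving
\[
\tilde\Theta_m-\tfrac{m+2}{b-a}\tilde\Theta_{m+1}
=-\tfrac{(b-a)^m}{(m+1)!}\,w_t(b)-\tilde\Omega_{m-1}+\tfrac{m+2}{b-a}\tilde\Omega_m .
\]
This matches the displayed $\Psi_m$ \emph{only} if one reads $\tilde\Omega_{m-1},\tilde\Omega_m$ in place of $\tilde\Theta_{m-1},\tilde\Theta_m$ there; in other words, the statement as printed appears to contain a typographical slip ($\tilde\Theta\leftrightarrow\tilde\Omega$). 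A quick sanity check confirms this: at $m=1$ the expression above equals $-\tfrac{b-a}{2}\Theta_4$ and reproduces Corollary~\ref{15sep09-corollary-4}, whereas the literal formula with $\tilde\Theta_0,\tilde\Theta_1$ does not. So your bookkeeping would never have closed against the printed $\Psi_m$; carry it out with the $\tilde\Omega$'s instead and the match is immediate.
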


\begin{corollary}\label{15sep09-corollary-4}
When $(\mathcal{N},m)=(1,1)$ and $w_t$ is replaced by $\dot{w}_t$,  the inequality (\ref{15aug0427}) turns into \cite[(25)]{JFI-2015-1378}, that is,
\begin{eqnarray}\label{15sep0912}
\mathcal{I}_1(\dot{w})\ge
2\Theta_3^\mathrm{T}R\Theta_3+4\Theta_4^\mathrm{T}R\Theta_4,
\end{eqnarray}
where
\begin{eqnarray}\label{15sep0913}
\Theta_3&=&w_t(b)\hspace*{-1mm}-\hspace*{-1mm}\frac{1}{b-a}\int_{a}^bw_t(s)\mathrm{d}s,\nonumber\\
\Theta_4&=&w_t(b)\hspace*{-1mm}+\hspace*{-1mm}\frac{2}{b-a}\int_{a}^b\hspace*{-2mm}w_t(s)\mathrm{d}s\hspace*{-1mm}-\hspace*{-1mm}\frac{6}{(b-a)^2}\int_{a}^b\hspace*{-2mm}\int_{\alpha}^b\hspace*{-2mm}w_t(s)\mathrm{d}s\mathrm{d}\alpha.\nonumber
\end{eqnarray}
\end{corollary}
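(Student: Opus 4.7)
The plan is to specialize Theorem \ref{15sep08-theorem-1} to $(\mathcal{N},m)=(1,1)$ with $w_t$ replaced by $\dot{w}_t$, then carry out two integration-by-parts computations to bring the resulting expressions into the $\Theta_3,\Theta_4$ form. Rather than first assembling $\Xi_{11}$ and then transforming $F_{11}(\dot w_t)$, I would work directly from the intermediate bound
\begin{eqnarray*}
\mathcal{I}_m(\dot w_t)\ge \sum_{k=0}^{\mathcal{N}}\chi_{km}^{-1}\pi_{km}^\mathrm{T}(\dot w_t)R\,\pi_{km}(\dot w_t)
\end{eqnarray*}
that appears inside the proof of Theorem \ref{15sep08-theorem-1}, since here the $\pi_{k1}(\dot w_t)$ are already in the orthogonal basis and will line up with $\Theta_3,\Theta_4$ up to scalars, avoiding the conjugation by $G_{11}^{-1}$.

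Next I would compute the two WOPs with weight $(s-a)$ on $[a,b]$ from (\ref{l062})--(\ref{l072}): $p_{01}(s)=1$, and $p_{11}(s)=(s-a)-\frac{2(b-a)}{3}$, with norms $\chi_{01}=\frac{(b-a)^2}{2}$ and $\chi_{11}=\frac{(b-a)^4}{36}$. Then from (\ref{15aug0413}), integration by parts gives
\begin{eqnarray*}
\pi_{01}(\dot w_t)=\int_a^b(s-a)\dot w_t(s)\,\mathrm{d}s=(b-a)w_t(b)-\int_a^bw_t(s)\,\mathrm{d}s=(b-a)\Theta_3,
\end{eqnarray*}
and, using the same technique on $\int_a^b(s-a)^2\dot w_t(s)\,\mathrm{d}s=(b-a)^2w_t(b)-2\int_a^b(s-a)w_t(s)\,\mathrm{d}s$ together with $\int_a^b(s-a)w_t(s)\,\mathrm{d}s=\int_a^b\int_\alpha^bw_t(s)\,\mathrm{d}s\,\mathrm{d}\alpha$, I would obtain $\pi_{11}(\dot w_t)=\frac{(b-a)^2}{3}\Theta_4$ after collecting the $w_t(b)$, single-integral, and double-integral terms in the coefficients $1$, $\frac{2}{b-a}$, $-\frac{6}{(b-a)^2}$ that define $\Theta_4$.

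Finally I would substitute these two identities into the intermediate bound, giving
\begin{eqnarray*}
\mathcal{I}_1(\dot w_t)\ge \tfrac{2}{(b-a)^2}\,(b-a)^2\Theta_3^\mathrm{T}R\Theta_3+\tfrac{36}{(b-a)^4}\,\tfrac{(b-a)^4}{9}\Theta_4^\mathrm{T}R\Theta_4=2\Theta_3^\mathrm{T}R\Theta_3+4\Theta_4^\mathrm{T}R\Theta_4,
\end{eqnarray*}
which is exactly (\ref{15sep0912}). The only step that needs some care is the algebraic recombination in the second integration by parts: one must carry three terms ($w_t(b)$, $\int w_t$, $\int\int w_t$) and verify that after subtracting $\tfrac{2(b-a)}{3}\pi_{01}(\dot w_t)$ the coefficient of $w_t(b)$ collapses from $(b-a)^2$ to $\tfrac{(b-a)^2}{3}$ and that the remaining ratios $\tfrac{2}{b-a}$ and $-\tfrac{6}{(b-a)^2}$ emerge with the correct signs. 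Everything else is a direct substitution and a cancellation of $(b-a)$ powers against $\chi_{k1}^{-1}$.
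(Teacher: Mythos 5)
Your proposal is correct and follows essentially the same route as the paper: it specializes Theorem \ref{15sep08-theorem-1} to $(\mathcal{N},m)=(1,1)$ with $w_t$ replaced by $\dot{w}_t$, and all of your computed quantities ($p_{11}(s)=(s-a)-\tfrac{2(b-a)}{3}$, $\chi_{01}=\tfrac{(b-a)^2}{2}$, $\chi_{11}=\tfrac{(b-a)^4}{36}$, $\pi_{01}(\dot w_t)=(b-a)\Theta_3$, $\pi_{11}(\dot w_t)=\tfrac{(b-a)^2}{3}\Theta_4$) check out and yield the coefficients $2$ and $4$ of (\ref{15sep0912}). Working from the intermediate bound $\mathcal{I}_m(\dot w_t)\ge\sum_k\chi_{km}^{-1}\pi_{km}^\mathrm{T}(\dot w_t)R\pi_{km}(\dot w_t)$ rather than from the assembled form (\ref{15aug0427}) is only a cosmetic reorganization (the theorem's proof shows the two are identical), and your explicit integration by parts merely fills in the step the paper leaves implicit when replacing $w_t$ by $\dot w_t$.
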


\begin{remark}\label{15sep10-remark-4}
Based on (\ref{15sep0810}), Park \textit{et al.} \cite[Corollary 1]{A-2015-204} derived
\begin{eqnarray}\label{15sep0914}
\mathcal{I}_1(w_t)\ge
\frac{2}{(b-a)^2}\left(\Omega_3^\mathrm{T}R\Omega_3+2\Omega_4^\mathrm{T}R\Omega_4\right).
\end{eqnarray}
Clearly, the inequality (\ref{15sep0812}) is more accurate than (\ref{15sep0914}).
\end{remark}

\begin{remark}\label{15sep10-remark-2}
It has been proven by Gyurkovics in \cite[Corollary 9]{A-2015-44} that Corollary \ref{15sep09-corollary-2} is equivalent to \cite[Lemma 4]{IEEE-T-AC-2015-free} in term of establishing stability criteria for delayed continuous-time systems. By a similar approach, we can show that Corollary \ref{15sep09-corollary-1} is equivalent to \cite[Lemma 1]{A-2015-189}.
However, unlike \cite[Lemma 4]{IEEE-T-AC-2015-free} and \cite[Lemma 1]{A-2015-189}, no free-weighting matrix is involved in Corollaries \ref{15sep09-corollary-1} and \ref{15sep09-corollary-2}.
\end{remark}

\begin{remark}\label{15sep10-remark-1}
It has been proven by Gyurkovics in \cite[Theorem 6]{A-2015-44} that the inequality in \cite[Lemma 2.4]{CNSNS-2013-1246} (i.e., \cite[(12)]{DDNS-2013-793686}) is more conservative than (\ref{15sep0910}). By a similar approach, one can prove that the inequality \cite[(13)]{DDNS-2013-793686}) is more conservative than (\ref{15sep0912}).
\end{remark}

\begin{remark}
The inequalities in Corollaries \ref{15sep12-corollary-1} and \ref{15sep12-corollary-2} are more accurate than ones in \cite[Lemmas 5 and 6]{JFI-2015stability}, respectively,
since the coefficients of the second item on the right-hand side of the inequalities in \cite[Lemmas 5 and 6]{JFI-2015stability} is $\frac{m!(m+3)}{(b-a)^{m+1}}$ which is smaller than
$\frac{m!(m+1)^2(m+3)}{(b-a)^{m+1}}$ in Corollaries \ref{15sep12-corollary-1} and \ref{15sep12-corollary-2}.
\end{remark}

\begin{remark}\label{15sep11-remark-1}
Note that Corollary \ref{15sep09-corollary-2} refines the inequality proposed
in \cite[Lemma 5]{IFAC-2012}, in which the second term of the righthand side is $\frac{\pi^2}{4}\Theta_1^\mathrm{T}R\Theta_1$ which is less than or equal to $3\Theta_1^\mathrm{T}R\Theta_1$. So,
Corollary \ref{15sep09-corollary-2} is less conservative than \cite[Lemma 5]{IFAC-2012}.
\end{remark}

\begin{remark}\label{15sep10-remark-3}
If $(s-a)^k$ is replaced by $(b-s)^k$ for all positive integer $k$ throughout this paper, then we can obtain new WOPs-based integral inequalities like (\ref{15aug0427}), which is a generalization of
\cite[Corollary 4]{A-2013-2860}, \cite[(3.1) and (3.8)]{JFI-2015enhanced} and \cite[(18) and (26)]{JFI-2015-1378}.
\end{remark}

Corollaries \ref{15sep08-corollary-1}--\ref{15sep09-corollary-4} imply that Theorem \ref{15sep08-theorem-1} contains the corresponding results of \cite{Gu(2000),IJRNC-2009-1364,AMC-2013-714,JFI-2015-1378,A-2013-2860,ECC-2014-448,CDC-2013-946,SCL-2015-1,ECC-2013} as special cases,
while Remarks \ref{15sep10-remark-4}--\ref{15sep10-remark-3} present that Theorem \ref{15sep08-theorem-1} improves the corresponding results of \cite{A-2015-204,JFI-2015stability,IEEE-T-AC-2015-free,A-2015-189,CNSNS-2013-1246,DDNS-2013-793686,JFI-2015-1378,JFI-2015enhanced,IFAC-2012}.
Therefore, Theorem \ref{15sep08-theorem-1} is a generalization of these literature.

\section{Conclusion}\label{15jun06-section-1}

In this paper, we have provided WOPs-based integral inequalities
which encompass and/or improve the corresponding inequalities in \cite{Gu(2000),IJRNC-2009-1364,AMC-2013-714,A-2013-2860,ECC-2014-448,CDC-2013-946,SCL-2015-1,ECC-2013,
A-2015-204,JFI-2015stability,IEEE-T-AC-2015-free,A-2015-189,CNSNS-2013-1246,DDNS-2013-793686,JFI-2015-1378,JFI-2015enhanced,IFAC-2012}.
From these literature, it is clear that the WOPs-based integral inequalities obtained in this paper have potential applications in establishing less conservative stability criteria for delayed continuous-time systems. This will be proceeded in our future work.


\end{document}